\newtheorem{theorem}{Theorem}[section]
\newtheorem{proposition}[theorem]{Proposition}
\newtheorem{lemma}[theorem]{Lemma}
\newtheorem{definition}[theorem]{Definition}
\newtheorem{assumption}[theorem]{Assumption}
\newcommand{\cL}{\mathcal{L}}
\newcommand{\R}{\mathbb{R}}
\newcommand{\be}{\begin{equation}}
\newcommand{\ee}{\end{equation}}
\newcommand{\ba}{\begin{array}}
\newcommand{\ea}{\end{array}}
\newcommand{\bad}{\begin{aligned}}
\newcommand{\ead}{\end{aligned}}
\newcommand{\normfro}[1]{\| #1 \|_{\text{F}}}
\newcommand{\sk}{\mathrm{skew}}
\newcommand{\sym}{\mathrm{sym}}
\newcommand{\St}{\mathrm{St}}
 \newcommand{\grad}{\mathrm{grad}}
\def\norm#1{\|{#1}\|}
\def\BibTeX{{\rm B\kern-.05em{\sc i\kern-.025em b}\kern-.08em
    T\kern-.1667em\lower.7ex\hbox{E}\kern-.125emX}}
\begin{document}
\title{Local Linear Convergence of Infeasible Optimization with Orthogonal Constraints}

\author{Youbang Sun$^{1}$, Shixiang Chen$^{2}$, Alfredo Garcia$^{3}$ and Shahin Shahrampour$^{1}$
\thanks{ This work is supported in part by NSF ECCS-2240788 and NSF ECCS-2240789 Awards. }
\thanks{$^{1}$ Y. Sun and S. Shahrampour are with the Department of Mechanical and Industrial Engineering at Northeastern University, Boston, MA 02115, USA. 
{\tt\small email:\{sun.youb,s.shahrampour\}@northeastern.edu}.}
\thanks{$^{2}$  S. Chen is with the School of Mathematical Sciences and Key Laboratory of the Ministry of Education for Mathematical Foundations and Applications of Digital Technology, University of Science and Technology of China, Hefei, 230027, P. R. China.
{\tt \small email: shxchen@ustc.edu.cn}.}
\thanks{$^{3}$  A. Garcia is with the Department of Industrial and Systems Engineering at Texas A\&M University, College Station, TX 77845, USA. 
{\tt \small email: alfredo.garcia@tamu.edu}.}
}

\maketitle
\thispagestyle{empty}

\begin{abstract}
Many classical and modern machine learning algorithms require solving optimization tasks under orthogonality constraints. Solving these tasks with feasible methods requires a gradient descent update followed by a retraction operation on the Stiefel manifold, which can be computationally expensive. Recently, an infeasible retraction-free approach, termed the landing algorithm, was proposed as an efficient alternative. Motivated by the common occurrence of orthogonality constraints in tasks such as principle component analysis and training of deep neural networks, this paper studies the landing algorithm and establishes a novel linear convergence rate for smooth non-convex functions using only a local Riemannian PŁ condition. Numerical experiments demonstrate that the landing algorithm performs on par with the state-of-the-art retraction-based methods with substantially reduced computational overhead.
\end{abstract}

\begin{IEEEkeywords}
Riemannian optimization, orthogonality constraints, infeasible methods, linear convergence.
\end{IEEEkeywords}


\vspace{-2mm}
\section{Introduction}
\label{sec:introduction}

Orthogonality constraints naturally appear in many machine learning problems, ranging from the classical principal component analysis (PCA) \cite{hotelling1933analysis} and canonical correlation analysis (CCA) \cite{hotelling1935canonical} to decentralized spectral analysis  \cite{kempe2008decentralized}, low-rank matrix approximation \cite{kishore2017literature} and dictionary learning \cite{raja2015cloud}.  More recently, due to the distinctive properties of orthogonal matrices,  
orthogonality constraints and regularization methods have been used for training deep neural networks \cite{arjovsky2016unitary,vorontsov2017orthogonality}, providing improvements in model robustness and stability \cite{trockman2021orthogonalizing}, as well as for adaptive fine-tuning in large language models \cite{zhang2023adaptive}. Optimization with orthogonality constraints is typically formulated as,
\vspace{-2mm}

\begin{equation}\label{eq:cen_opt}
\begin{aligned}
    \min_{x \in \mathbb{R}^{d\times r}} &f(x),\\
    \text{s.t.} \quad x\in \St(d, r) & \triangleq \{x \in \mathbb{R}^{d\times r}| x^\top x = I_r\},
\end{aligned}
\vspace{-1mm}
\end{equation}
where $\St(d,r)$ is referred to as the {\it Stiefel} manifold.

The Stiefel manifold can be seen as an equality constraint that is a non-convex set. As a result, traditional first-order optimization algorithms (e.g., gradient descent) fail to solve Problem \eqref{eq:cen_opt} directly. The topic of functional constrained optimization \cite{boob2023stochastic} has recently emerged; although these works address tasks with non-convex constraints, they mostly require hierarchical algorithms for convergence. Another common approach is to consider a Riemannian optimization algorithm, where in each iteration, the algorithm takes a Riemannian gradient step instead of a Euclidean one. 
Then, the algorithm performs a retraction operation to ensure feasibility and stay on the manifold \cite{absil2012projection}.
It has been shown by the previous literature that these retraction-based algorithms can achieve similar iteration complexity rates \cite{Boumal2016,zhang2016first} compared to their Euclidean counterparts.

While achieving favorable iteration complexity and satisfying the feasibility constraint in \eqref{eq:cen_opt}, these methods are computationally expensive due to the retraction operation in each iteration. A typical retraction operation on the Stiefel manifold usually requires calculating a matrix exponential, inversion, or square root. This means that a typical retraction requires $\mathcal{O}(dr^2)$ or more algebraic operations. When $r$ is large, the calculation time for retractions could dominate the runtime of Riemannian algorithms. In addition, matrix inversion or exponentiation have generally not been well-suited to execute on GPUs until recently \cite{adil2022first}, making the application of \eqref{eq:cen_opt} in modern machine learning computationally prohibitive.

As a mitigation, recent works have proposed \textit{infeasible} approaches to solve manifold optimization problems. These methods do not require explicit retraction operations (or other projections) and are often referred to as \textit{retraction-free} methods. While retraction-free optimization iterates are not strictly feasible, they gradually converge to the feasibility region.

In this work, we focus on a recently proposed retraction-free (or infeasible) method for solving Problem \ref{eq:cen_opt}, called the {\it landing algorithm} \cite{ablin2022fast, ablin2023infeasible}. Although \cite{ablin2023infeasible} provided a global finite-time convergence guarantee for this algorithm, the numerical results suggested that the actual convergence speed is much faster. Our work seeks to study this empirical observation and provide a theoretical justification for the faster convergence while exploring the possible assumptions required on the objective function in order to achieve this convergence guarantee. To this end, 
\begin{enumerate}
    \item In Section \ref{sec:formulation}, we introduce a background for the optimization on the Stiefel manifold and provide the retraction-free landing algorithm.
    \item In Section \ref{sec:analysis}, we flesh out the set of assumptions on the objective function, notably the local Riemannian Polyak-Łojasiewicz (PŁ) condition on the Stiefel manifold. 
    We then introduce the merit function for our analysis and provide a {\it local linear} convergence guarantee for the landing algorithm.
    \item In Section \ref{sec:experiments}, we present numerical experiments and demonstrate the advantage of the landing algorithm in terms of convergence speed and efficiency in a high-dimensional PCA task as well as training convolutional neural networks (CNNs).
\end{enumerate}

\subsection{Related Literature}
\subsubsection{Retraction-Based Riemannian Optimization}
The classical optimization algorithms under Riemannian geometry rely on the diffeomorphic nature between a Riemannian manifold and the Euclidean space. 
A classical approach is the line-search method; this includes Armijo line search and accelerated line search \cite{absil2008optimization}. 
However, these approaches are rarely used in modern machine learning with large datasets, since multiple evaluations along a geodesic are often undesirable. 
As a mitigation, many methods such as Riemannian gradient descent \cite{zhang2016first} use retractions to ensure the feasibility of the iterates. 
With both retractions and parallel transports, a wide range of optimization algorithms have been adapted to the manifold context. These methods include Riemannian accelerated gradient \cite{zhang2018towards}, Riemannian conjugate gradient \cite{sato2016dai}, and adaptive Riemannian gradient methods  \cite{becigneul2018riemannian}. 

\subsubsection{Retraction-Free Approaches}
In order to ensure the feasibility of the manifold constraint, the previously mentioned algorithms still require costly operations, such as retractions and parallel transports. Recently, several approaches have been proposed in an attempt to reformulate the constrained problem in \eqref{eq:cen_opt} into equivalent unconstrained problems, which can be solved efficiently. 
\cite{schechtman2023orthogonal} introduced ODCGM, using a non-smooth penalty to ensure that stationarity is strictly recovered in the alternative merit function. 
Another approach stems from Fletcher's penalty function \cite{fletcher2002nonlinear} and implements a smooth merit function.
\cite{gao2019parallelizable} proposed a proximal linearized augmented Lagrangian algorithm (PLAM) and solved the optimization problem with an augmented Lagrangian method. 
\cite{xiao2022class} proposed PenC, an exact penalty function model, and implemented a projected inexact gradient algorithm. PenC was later extended to a constraint dissolving function (CDF) approach in \cite{xiao2024dissolving}, with examples covering a wide range of manifolds. In this work, our focus is on proving fast local convergence for the landing algorithm \cite{ablin2022fast, ablin2023infeasible} as it provides a simple approach to retraction-free optimization on the Stiefel manifold, and the empirical evidence in \cite{ablin2023infeasible} suggested fast convergence speed around stationary points.

\section{Problem Formulation}\label{sec:formulation}

\subsection{Notations}\label{sec:notations}

We start this section with definitions and notations used in this manuscript. 
We denote the transpose of vector $x$ (and matrix $A$) as $x^\top$ (and $A^\top$), respectively.
The Frobenius norm of matrix $A$ is denoted as $\normfro{A}$. The skew and symmetric parts of a square matrix $A$ are denoted as $\sk(A) = \frac{1}{2}(A-A^\top)~\text{and}~\sym(A) = \frac{1}{2}(A+A^\top)$, respectively. The matrix inner product of matrices $A$ and $B$ is defined as $\langle A, B\rangle \triangleq Tr(AB^\top)$. 
We denote the identity matrix of rank $r$ as $I_r$. Given a set $\mathcal{S}$, we represent $\mathcal{D}(\mathcal{S}, \delta) \triangleq \{x \mid \text{dist}(\mathcal{S}, x) \leq \delta\}$, where $\text{dist}(\mathcal{S}, x)=\min_{y\in\mathcal{S}} \normfro{x-y}$. 

\subsection{Riemannian Optimization}
In the Euclidean space, the function gradient $\nabla f(x)$ is calculated as the differential of $f$ evaluated at $x$. The classical gradient descent (GD) algorithm seeks to minimize the objective function by taking a step towards the negative Euclidean gradient, and the iterate is defined as $x_{k+1} = x_k - \alpha \nabla f(x_k)$, with $\alpha$ denoting the step size.
On the other hand, the Riemannian gradient $\grad f(x)$ is defined on the tangent space of a point $x$ on the manifold with respect to a given metric. 
For the Stiefel manifold specifically, the Riemannian gradient with respect to the canonical metric \cite{edelman1998geometry} is given by the following closed-form expression:
\begin{equation}\label{eq:relative-gradient}
    \grad f(x) = \sk(\nabla f(x) x^\top)x.
\end{equation}
The Riemannian GD uses $\grad f(x)$ to perform a retraction operation on the iterate, which allows the iterate to move while staying on $\St(d, r)$; the Riemannian GD algorithm updates the iterates as
$$x_{k+1} = \mathcal{R}_{x_k}(-\alpha \grad f (x_k)),$$
where $\mathcal{R}_{x}: \mathcal{T}_{x} {\St(d,r)} \to \St(d,r)$ denotes the retraction operation, which maps vectors in the tangent space at point $x$ to points on the Stiefel manifold. Though there are multiple choices of retraction operations on the Stiefel manifold, including exponential retraction, Caylay retraction, and projection retraction, all of them require high computational costs. To address this issue, many recent application-focused works such as deep learning-related studies \cite{bansal2018can}  use a penalty function instead and aim to minimize $f(x) + \lambda \mathcal{N}(x)$, where $\mathcal{N}(x)$ penalizes the constraint violation. Although these methods are conceptually simple and have relatively low computational complexity, they only find an approximately accurate solution, and the end product is often infeasible.

\subsection{Retraction-Free Algorithm}
Contrary to the existing retraction-based and penalty-based algorithms, this paper focuses on the landing algorithm  proposed by \cite{ablin2022fast} as an infeasible, retraction-free numerical solver for  Problem \eqref{eq:cen_opt}. Unlike retraction-based algorithms, the iterates of the landing algorithm do not strictly stay on the Stiefel manifold, avoiding high computational costs. The landing algorithm also differs from conventional penalty methods as it converges to a critical point on the manifold.

The landing algorithm updates are based on the landing field, defined as
\begin{equation}\label{eq:landing_alg}
    \begin{aligned}
        \Lambda(x) &\triangleq \grad f(x) + \lambda \nabla p(x),
    \end{aligned}
\end{equation}
where the penalty function $p(x)\triangleq \frac{1}{4} \normfro{x^\top x - I_r}^2$ and $\lambda>0$ is a constant.
The exact update for the landing algorithm is provided in Algorithm \ref{alg:retraction_free}, where $\alpha$ denotes the step size. Though the iterate $x_k$ may not belong to $\St(d, r)$, $\grad f(x_k)$ is still well-defined as in \eqref{eq:relative-gradient} and is referred to as the {\it relative} gradient.

\begin{algorithm}[t]
	\caption{Retraction-free Landing Algorithm} 
 \label{alg:retraction_free}
	\begin{algorithmic}[1]
		\STATE{{\bf Input:} initial point $x_0 \in \St(d,r)^\epsilon$, $\alpha >0 , \lambda > 0$, $\epsilon\in(0,\frac{3}{4}).$ }  
		\FOR{$k=0,1,\ldots$} 
		\STATE{Calculate the landing field $\Lambda(x)$ and update $x$ as:
  \begin{equation*}
  \begin{aligned}
      \Lambda(x_k) &= \grad f(x_k) + \lambda x_k(x_k^\top x_k - I_r),\\
      x_{k+1} &= x_k -  \alpha \Lambda(x_k).
  \end{aligned}
  \end{equation*}
  }
\ENDFOR
\end{algorithmic}
\end{algorithm}

Observe that when $x$ is on the manifold, the penalty function $p(x)$ and its gradient $\nabla p(x)$ are equal to zero; hence, $\Lambda(x) = \grad f(x)$ for $x \in \St(d, r)$.
In addition, the two terms of the landing field are orthogonal to each other, since $\langle \grad f(x), \nabla p(x)\rangle = \langle \sk(\nabla f(x) x^\top), x(x^\top x -I_r)x^\top\rangle = 0$ \cite{ablin2023infeasible}. Therefore, $\Lambda(x) = 0$ if and only if $\nabla p(x)=0$ and $\grad f(x)=0$, which shows that $\Lambda(x) = 0$ when $x$ is a first-order stationary point of $f(x)$ on the manifold.

While the landing algorithm allows the iterates to move outside of the manifold, one must ensure that they stay close enough to the manifold within a {\it safety region}.
\begin{definition}[Safety Region \cite{ablin2023infeasible}]
With $\epsilon\in(0,3/4)$, we define the safety region as
\begin{equation}\label{eq:safe_region}
    \St(d,r)^\epsilon \triangleq \{x \in \R^{d\times r}| \normfro{x^\top x - I_r} \leq \epsilon\}.
\end{equation}
\end{definition}
It has been shown that for the landing field $\Lambda(x)$ defined in \eqref{eq:landing_alg}, a safety step size exists such that the algorithm never leaves the safety region.
\begin{proposition}[Safety Step Size \cite{ablin2023infeasible}]\label{prop:safe_cen}
    Let $x_k \in \St(d,r)^\epsilon$. Assuming that $\normfro{\grad f(x_k)} \leq G$, the next iterate satisfies $x_{k+1} \in \St(d,r)^\epsilon$ if the step size $\alpha$ satisfies
    \begin{equation*}
        \alpha \leq \alpha_{safe} \triangleq \min\bigg\{\frac{\lambda \epsilon(1-\epsilon)}{G^2 + \lambda^2(1+\epsilon)\epsilon^2}, \sqrt{\frac{\epsilon}{2G^2}}, \frac{1}{2\lambda}\bigg\}.
    \end{equation*}
\end{proposition}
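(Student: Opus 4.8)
The plan is to establish one‑step invariance of the safety region by controlling how the constraint‑violation matrix $N_k \triangleq x_k^\top x_k - I_r$ changes under a single landing step (iterating this, together with $x_0\in\St(d,r)^\epsilon$, then gives invariance of the whole trajectory). Writing $\Lambda_k \triangleq \Lambda(x_k)$ and substituting $x_{k+1} = x_k - \alpha\Lambda_k$ into $N_{k+1} = x_{k+1}^\top x_{k+1} - I_r$ gives
$$N_{k+1} = N_k - \alpha\big(x_k^\top\Lambda_k + \Lambda_k^\top x_k\big) + \alpha^2\Lambda_k^\top\Lambda_k .$$
The structural fact that makes this tractable is that $x_k^\top\grad f(x_k) = x_k^\top\sk\!\big(\nabla f(x_k)x_k^\top\big)x_k$ is skew-symmetric, so it cancels in the symmetrized middle term; since $x_k^\top x_k = I_r + N_k$ and $N_k$ is symmetric, $x_k^\top\Lambda_k + \Lambda_k^\top x_k = 2\lambda(N_k + N_k^2)$, and therefore
$$N_{k+1} = (1-2\alpha\lambda)N_k - 2\alpha\lambda N_k^2 + \alpha^2\Lambda_k^\top\Lambda_k .$$
This exhibits $N_{k+1}$ as a (near-)contraction of $N_k$ plus an $\mathcal{O}(\alpha^2)$ perturbation, which is exactly the form we want.

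Next I would bound the Frobenius norm. The first two terms equal $g(N_k)$ for the scalar polynomial $g(t)=t\big(1-2\alpha\lambda(1+t)\big)$, so by diagonalizing the symmetric matrix $N_k$ (whose eigenvalues lie in $[-\epsilon,\epsilon]$ because $\normop{N_k}\le\normfro{N_k}\le\epsilon$) one reduces to the scalar estimate $|g(\mu)| \le \big(1-2\alpha\lambda(1-\epsilon)\big)|\mu|$ for $|\mu|\le\epsilon$, which is a short algebra check valid whenever $2\alpha\lambda\le 1$; hence $\normfro{g(N_k)} \le \big(1-2\alpha\lambda(1-\epsilon)\big)\normfro{N_k}$. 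For the last term, using the orthogonality $\langle\grad f(x_k),\nabla p(x_k)\rangle = 0$ noted in the text one has $\normfro{\Lambda_k}^2 = \normfro{\grad f(x_k)}^2 + \lambda^2\normfro{x_k(x_k^\top x_k - I_r)}^2$, and combining $\normfro{\grad f(x_k)}\le G$ with $\normop{x_k}^2 = \normop{x_k^\top x_k} = \normop{I_r+N_k} \le 1+\epsilon$ yields $\normfro{\Lambda_k^\top\Lambda_k} \le \normfro{\Lambda_k}^2 \le G^2 + \lambda^2(1+\epsilon)\epsilon^2$.

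Putting the two bounds together and using $\normfro{N_k}\le\epsilon$ once more,
$$\normfro{N_{k+1}} \le \big(1-2\alpha\lambda(1-\epsilon)\big)\epsilon + \alpha^2\big(G^2 + \lambda^2(1+\epsilon)\epsilon^2\big),$$
and the right-hand side is at most $\epsilon$ exactly when $\alpha\big(G^2+\lambda^2(1+\epsilon)\epsilon^2\big) \le 2\lambda\epsilon(1-\epsilon)$. The first entry of the $\min$ in the statement, $\frac{\lambda\epsilon(1-\epsilon)}{G^2+\lambda^2(1+\epsilon)\epsilon^2}$, is a (conservative) sufficient version of this inequality; the entry $\frac{1}{2\lambda}$ is exactly the condition $2\alpha\lambda\le 1$ used in the contraction estimate on $g(N_k)$; and $\sqrt{\epsilon/(2G^2)}$ arises if, rather than merging the bounds as above, one prefers to control the $\alpha^2G^2$ part of the perturbation separately via $\alpha^2G^2\le\epsilon/2$. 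I expect the only genuinely delicate point to be the handling of the two nonlinear terms $-2\alpha\lambda N_k^2$ and $\alpha^2\Lambda_k^\top\Lambda_k$: one must verify that the linear contraction factor $1-2\alpha\lambda$ dominates them uniformly over $\normfro{N_k}\in[0,\epsilon]$ and over the admissible step sizes, which is precisely what forces the algebraic form of $\alpha_{safe}$; everything else is routine matrix algebra.
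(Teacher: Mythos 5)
The paper does not prove this proposition; it is imported verbatim from the cited reference, so there is no in-paper argument to compare against. Your proof is correct and self-contained, and it follows the same strategy as the original source: expand $N_{k+1}=x_{k+1}^\top x_{k+1}-I_r$, use the skew-symmetry of $x_k^\top\grad f(x_k)$ to reduce the first-order term to $2\lambda(N_k+N_k^2)$, contract the polynomial part via the spectral estimate on $[-\epsilon,\epsilon]$ (valid under $2\alpha\lambda\le 1$, i.e.\ the third entry of the $\min$), and absorb the $\alpha^2\Lambda_k^\top\Lambda_k$ perturbation using the orthogonality $\langle\grad f,\nabla p\rangle=0$ and $\normop{x_k}^2\le 1+\epsilon$. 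All the individual steps check out, including the edge case $\normfro{N_k}=0$, which your final inequality covers since the bound $(1-2\alpha\lambda(1-\epsilon))\epsilon+\alpha^2 C\le\epsilon$ is monotone in $\normfro{N_k}$. The only cosmetic remark is that your derivation never actually uses the middle entry $\sqrt{\epsilon/(2G^2)}$ of the $\min$ (and in fact establishes the conclusion under the weaker threshold $2\lambda\epsilon(1-\epsilon)/C$ rather than $\lambda\epsilon(1-\epsilon)/C$); since imposing an extra upper bound on $\alpha$ only strengthens the hypothesis, the stated proposition still follows, so this is slack rather than a gap.
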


\section{Theoretical Analysis}\label{sec:analysis}
In this section, we first state our theoretical assumptions and then introduce the merit function for our analysis. Next, we provide our main theoretical result, which is the local linear convergence of the landing algorithm.
\subsection{Technical Assumptions}
Let us start by introducing the following assumptions on the objective function in \eqref{eq:cen_opt}. These assumptions are considered to be standard in the literature \cite{zhang2016first, han2023riemannian} and are necessary for our improved convergence guarantee.
\begin{assumption} [Lipschitz Smoothness]
\label{assump:Lip}
We assume that the function $f: \mathbb{R}^{
d\times r} \rightarrow \mathbb{R}$ is twice continuously differentiable and $L$-smooth, i.e., for all $x, y \in \mathbb{R}^{d\times r}$, we have
\begin{equation}
    f(y) \leq f(x) + \langle\nabla f(x),y - x\rangle + \frac{L}{2}\normfro{y-x}^2.
\end{equation}
\end{assumption}
Different from the retraction-based algorithms, since the iterates can go outside the manifold in the infeasible algorithms, we assume the smoothness of the function in the entire Euclidean space instead of the Stiefel manifold. We also note that the smoothness is defined with respect to the Euclidean gradient instead of the Riemannian gradient.

Next, we discuss the local gradient growth of the non-convex function $f(x)$ in the sense of Polyak-Łojasiewicz, formally defined below.
\begin{assumption} [Local Riemannian PŁ Condition]\label{assump:PL}
The objective function $f(x) : \mathbb{R}^{
d\times r} \rightarrow \mathbb{R}$ satisfies the local Riemannian Polyak-Łojasiewicz (PŁ) condition on the Stiefel manifold with a factor $\mu>0$ if 
\begin{equation}
    |f(x) -f_{\mathcal{S}}^*| \leq \frac{1}{2\mu} \normfro{\grad f(x)}^2,
\end{equation}
for any point $x \in \St(d,r) \cap \mathcal{D}(\mathcal{S},  2\delta)$, where $\mathcal{S}$ denotes the set of all local minima with a given value $f_{\mathcal{S}}^*$.
\end{assumption}

Different from Assumption \ref{assump:Lip}, the assumption on the PŁ condition is strictly on the manifold, and the condition deals with the Riemannian gradient $\grad f(x)$ instead of the Euclidean gradient $\nabla f(x)$. 

The local Riemannian PŁ condition is less restrictive compared to other commonly studied scenarios. It is a relaxation of the global Riemannian PŁ condition, which could be restrictive in this case since the Stiefel manifold has a positive curvature. This assumption is also easier to satisfy compared to the geodesic strong convexity. Many traditional tasks involving the Stiefel manifold, such as the PCA problem and the generalized quadratic problem, satisfy the local PŁ condition but do not meet the other two conditions mentioned above.
These are sometimes referred to as quadratic problems and have been shown to satisfy Assumption \ref{assump:PL}, demonstrated in \cite{liu2019quadratic}.

\subsection{Merit Function}
To better understand the dynamics of the landing algorithm, we consider the following merit function \cite{ablin2023infeasible}, which is defined with respect to the objective in \eqref{eq:cen_opt} as
\begin{equation}\label{eq:merit_def}
\begin{aligned}
    \cL(x) &\triangleq f(x) + h(x) + \gamma p(x) ,\\
    \text{and} \quad h(x) &\triangleq -\frac{1}{2}\langle \sym(x^\top \nabla f(x)), x^\top x - I_r\rangle.
\end{aligned} 
\end{equation}
From the definition of $\cL$, one can prove that for any point $x\in\St(d,r)$, $\nabla \cL(x) = \nabla f(x)-x~\sym(x^\top \nabla f(x))$ (see Eq. (44) in \cite{ablin2023infeasible}). 
Similar to Proposition 6 of \cite{ablin2023infeasible}, for a given $\epsilon \in (0, 3/4)$, $\gamma$ is required to satisfy the following condition:
\begin{equation}\label{ineq:gamma}
\begin{aligned}
 \gamma \geq & \frac{2}{3-4\epsilon}\left(L(1-\epsilon) + 3s + \hat{L}^2 \frac{(1+\epsilon)^2}{\lambda(1-\epsilon)}\right),\\
    \text{where}~~~~s &= \sup_{x\in \St(d,r)^\epsilon} \normfro{\sym(x^\top \nabla f(x))},~~~\text{and}\\
    \hat{L} &= \max(L, \max_{x\in \St(d,r)^\epsilon} \normfro{\nabla f(x)}).
\end{aligned}
\end{equation}
The following proposition on the properties of $\cL(x)$ is also given in \cite{ablin2023infeasible} and is useful for our technical analysis.
\begin{proposition}\label{prop_L}
The merit function $\cL(x)$ satisfies the following properties.
\begin{enumerate}
    \item  $\cL(x)$ is $L_{\cL}$-smooth on $x \in \St(d, r)^\epsilon$, with $L_{\cL} \leq L_{f+h}+ (2+3\epsilon)\gamma$, where $L_{f+h}$ is the smoothness  of $f+h$.
    \item For $\rho = \min\{\frac{1}{2}, \frac{\gamma}{4\lambda(1+\epsilon)}\}$ and $x \in \St(d, r)^\epsilon$, we have 
    \begin{equation*}
        \langle \Lambda(x), \nabla \cL (x) \rangle \geq \rho \normfro{\Lambda(x)}^2.
    \end{equation*}
\end{enumerate}
\end{proposition}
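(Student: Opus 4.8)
For the first property, I would bound the Hessian of the penalty $p$ on the safety region and add it to the smoothness of $f+h$. Differentiating $\nabla p(x)=x(x^\top x-I_r)$ gives
\[
\nabla^2 p(x)[\xi]=\xi(x^\top x-I_r)+x(\xi^\top x+x^\top\xi),
\]
and on $\St(d,r)^\epsilon$ one has $\normop{x^\top x-I_r}\le\normfro{x^\top x-I_r}\le\epsilon$ and $\normop{x}^2=\normop{x^\top x}\le 1+\epsilon$, so $\normfro{\nabla^2 p(x)[\xi]}\le\bigl(\epsilon+2(1+\epsilon)\bigr)\normfro{\xi}=(2+3\epsilon)\normfro{\xi}$. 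Hence $p$ is $(2+3\epsilon)$-smooth on $\St(d,r)^\epsilon$, and since smoothness constants add under sums, $L_\cL\le L_{f+h}+(2+3\epsilon)\gamma$.

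For the second property, the plan is to use the orthogonal splitting of both $\Lambda$ and $\nabla\cL$ into a ``tangent'' and a ``normal'' piece. Write $\Lambda(x)=\Psi(x)+\lambda\mathcal{N}(x)$ with $\Psi\triangleq\grad f$ and $\mathcal{N}\triangleq\nabla p$, and $\nabla\cL(x)=g(x)+\gamma\mathcal{N}(x)$ with $g\triangleq\nabla(f+h)$. Since $x(x^\top x-I_r)x^\top$ is symmetric while $\sk(\nabla f(x)x^\top)$ is skew-symmetric, $\langle\Psi,\mathcal{N}\rangle=0$, so $\normfro{\Lambda}^2=\normfro{\Psi}^2+\lambda^2\normfro{\mathcal{N}}^2$ and the cross term $\gamma\langle\Psi,\mathcal{N}\rangle$ vanishes, leaving $\langle\Lambda,\nabla\cL\rangle=\langle\Psi,g\rangle+\lambda\langle\mathcal{N},g\rangle+\lambda\gamma\normfro{\mathcal{N}}^2$. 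It then suffices to show (i) $\langle\Psi,g\rangle\ge\tfrac12\normfro{\Psi}^2-c_1\normfro{x^\top x-I_r}^2$ and (ii) $\lvert\langle\mathcal{N},g\rangle\rvert\le c_2\normfro{x^\top x-I_r}^2$ for suitable constants $c_1,c_2$ depending on $L,s,\hat L,\epsilon$, and to note $\normfro{\mathcal{N}}^2\ge(1-\epsilon)\normfro{x^\top x-I_r}^2$: then $\lambda\gamma\normfro{\mathcal{N}}^2$ can absorb both the deficit $(c_1+\lambda c_2)\normfro{x^\top x-I_r}^2$ and the target $\rho\lambda^2\normfro{\mathcal{N}}^2$ as soon as $\gamma$ obeys a lower bound of the form \eqref{ineq:gamma} and $\rho\le\gamma/(4\lambda(1+\epsilon))$, while $\rho\le\tfrac12$ handles the $\normfro{\Psi}^2$ part.

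To obtain (i) and (ii) I would first differentiate $h$. Using the self-adjointness of $\nabla^2 f$ one finds
\[
g(x)=g_0(x)-\tfrac12\nabla f(x)(x^\top x-I_r)-\tfrac12\nabla^2 f(x)\bigl[x(x^\top x-I_r)\bigr],\qquad g_0\triangleq\nabla f-x\,\sym(x^\top\nabla f),
\]
so $g=g_0+E$ with $\normfro{E}=\mathcal{O}(\normfro{x^\top x-I_r})$ by Assumption~\ref{assump:Lip}. Because $x\,\sym(x^\top\nabla f(x))x^\top$ is symmetric, $\langle\Psi,g_0\rangle=\langle\Psi,\nabla f\rangle=\normfro{\sk(\nabla f(x)x^\top)}^2$; and since $xx^\top\preceq(1+\epsilon)I_d$ on the safety region, $\normfro{\Psi}^2\le(1+\epsilon)\langle\Psi,g_0\rangle$, whence $\langle\Psi,g\rangle\ge\tfrac1{1+\epsilon}\normfro{\Psi}^2-\normfro{\Psi}\normfro{E}$ and a Young's inequality exploiting the slack $\tfrac1{1+\epsilon}-\tfrac12>0$ yields (i). For (ii), the correction $h$ is designed precisely so that the linear-in-violation part of $\langle\mathcal{N},g_0\rangle$ cancels: a short computation gives $\langle\mathcal{N},g_0\rangle=-\Tr\bigl((x^\top x-I_r)\sym(x^\top\nabla f(x))(x^\top x-I_r)\bigr)$, which is $\mathcal{O}(\normfro{x^\top x-I_r}^2)$, and $\lvert\langle\mathcal{N},E\rangle\rvert\le\normfro{\mathcal{N}}\normfro{E}$ is as well.

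The hard part is the off-manifold accounting in this last step: one must verify that \emph{every} correction term — namely $g-g_0$ and the leftover of $\langle\mathcal{N},g_0\rangle$ — is genuinely quadratic rather than merely linear in the constraint violation $\normfro{x^\top x-I_r}$ (this is exactly why $h$ is chosen as in \eqref{eq:merit_def} and why Assumption~\ref{assump:Lip} demands a Hessian, not just a gradient, bound), and then to track the constants $L$, $s$, $\hat L$, $\lambda$, $\epsilon$ carefully enough to confirm that the range of $\gamma$ permitted by \eqref{ineq:gamma} dominates $c_1+\lambda c_2$ even after the $(1-\epsilon)$ factor lost when passing from $\normfro{\mathcal{N}}^2$ to $\normfro{x^\top x-I_r}^2$.
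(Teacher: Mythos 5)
Your proposal is sound and follows essentially the same route as the proof this paper relies on: the paper does not reprove Proposition~\ref{prop_L} but cites Proposition~6 of the landing-algorithm reference, whose argument is exactly your orthogonal tangent/normal splitting of $\Lambda$ and $\nabla\cL$, the cancellation $\langle\nabla p,\nabla(f+h)\rangle=-\Tr\bigl((x^\top x-I_r)\sym(x^\top\nabla f(x))(x^\top x-I_r)\bigr)$ engineered by $h$, and the Hessian bound $(2+3\epsilon)$ for $p$ on $\St(d,r)^\epsilon$. The only caveat is that your chain of Young-type estimates must be checked to reproduce the specific constants in \eqref{ineq:gamma} (rather than merely a bound of the same form), which you correctly flag as the remaining bookkeeping.
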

The first property in Proposition \ref{prop_L} shows that $\cL$ is indeed a smooth function. The second property establishes a relationship between the landing field and the gradient of the merit function. Since $\langle \Lambda(x), \nabla \cL (x) \rangle \geq 0$, the landing field always points towards an ascent direction for the merit function. Therefore, a landing step can be seen as a descent step on the merit function. Moreover, since $\langle \grad f(x), \nabla p(x)\rangle=0$, the second property shows that within the neighborhood $\St(d,r)^\epsilon$, 
if $\nabla \cL(x) = 0$, we have $\nabla p(x)=0$ and $\grad f(x)=0$.

In addition, due to the Lipschitz smoothness of $f$, it is easy to show that $\Lambda(x)$ is first-order Lipschitz continuous; we denote its Lipschitz constant as $L_{\Lambda}$. $L_\Lambda$ and $L_\cL$ can be set to be on the same order as $L$ given ideal $\lambda$ and $\gamma$. For the ease of analysis below, we denote $L' \triangleq \max \{\hat{L}, L_\Lambda, L_\cL\}.$

\subsection{Local Linear Convergence of Landing Algorithm}
In this section, we study the convergence of the landing algorithm under the local Riemannian PŁ condition. Since this paper focuses on the local convergence guarantees of the landing algorithm, and that Assumption \ref{assump:PL} is only applicable to $x \in \St(d,r) \cap \mathcal{D}(\mathcal{S}, 2\delta)$, we only characterize the convergence rate assuming that the iterates have reached a neighborhood of a local minimum $x^\ast \in \mathcal{S}$. This assumption is not surprising; even some classical methods (e.g., Newton's method) achieve faster local rates when initialization is close enough to a local minimizer. Nevertheless, one can guarantee that iterates of Algorithm \ref{alg:retraction_free} can reach a neighborhood of a {\it stationary} point given the general convergence theorem in \cite{schneider2015convergence} as well as the global finite-time convergence of the landing algorithm in \cite{ablin2023infeasible}.

Without loss of generality, we assume $f_{\mathcal{S}}^* =0$ and omit $f_{\mathcal{S}}^*$ when applicable. We first present the following lemma to show that for any $x$ in the neighborhood of a local minimizer and in the safety region, the merit function is locally dominated by the landing field.

\begin{lemma}[Pseudo Gradient Domination \cite{sun2024global} ]\label{lem:pseudo-grad-dominate}
Let $x \in \St(d,r)^\epsilon \cap \mathcal{D}(\mathcal{S}, \delta) $, and without the loss of generality assume $f_{\mathcal{S}}^* =0$. Then, under Assumptions \ref{assump:Lip} and \ref{assump:PL}, we have,
\begin{equation}
\begin{aligned}
    \mathcal{L}(x) 
    &\leq \frac{1}{\mu'} \normfro{\Lambda(x)}^2,
\end{aligned}
\end{equation}
where $\ \ \frac{1}{\mu'} = \max\left\{\frac{1}{\mu}, \frac{2(3+{2}\epsilon)^2\hat{L}^2 + \mu L'}{2\mu \lambda^2(1-\epsilon)^2} \right\}.$
\end{lemma}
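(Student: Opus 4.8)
# Proof Proposal for Lemma (Pseudo Gradient Domination)

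The plan is to bound each of the three terms in the merit function $\cL(x) = f(x) + h(x) + \gamma p(x)$ separately in terms of $\normfro{\Lambda(x)}^2$, exploiting the orthogonality $\langle \grad f(x), \nabla p(x) \rangle = 0$ which implies the Pythagorean identity $\normfro{\Lambda(x)}^2 = \normfro{\grad f(x)}^2 + \lambda^2 \normfro{\nabla p(x)}^2$. Since $\nabla p(x) = x(x^\top x - I_r)$, one has $\normfro{\nabla p(x)}^2 \geq (1-\epsilon)^2 \normfro{x^\top x - I_r}^2 = 4(1-\epsilon)^2 p(x)$ on the safety region (using that the smallest singular value of $x$ is bounded below by $\sqrt{1-\epsilon}$ when $\normfro{x^\top x - I_r} \le \epsilon$). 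This immediately controls the $\gamma p(x)$ term.

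The main work is bounding $f(x) + h(x)$. First I would relate $f(x)$ at an infeasible point to $f(\Proj_{\St}(x))$, i.e., $f$ evaluated at the nearest feasible point, using $L$-smoothness of $f$: the deviation is $O(\hat L)$ times $\normfro{x - \Proj_{\St}(x)}$, and the latter is itself $O(\normfro{x^\top x - I_r})$ on the safety region. At the projected point the local Riemannian PŁ condition (Assumption \ref{assump:PL}) applies — one needs $x \in \St(d,r) \cap \mathcal{D}(\mathcal{S}, 2\delta)$, which holds for the projection since $x \in \mathcal{D}(\mathcal{S},\delta)$ and the projection moves $x$ by at most its distance to the manifold, keeping it within $\mathcal{D}(\mathcal{S}, 2\delta)$ — giving $f(\Proj_{\St}(x)) \le \frac{1}{2\mu}\normfro{\grad f(\Proj_{\St}(x))}^2$. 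Then I would transfer the Riemannian gradient bound back from $\Proj_{\St}(x)$ to $x$ using Lipschitz continuity of $\grad f$ (a consequence of twice differentiability and the bound on $\nabla f$ over the compact safety region), absorbing the cross terms via Young's inequality; this is where the factor $L'$ and the $(3+2\epsilon)^2 \hat L^2$ term will emerge. The term $h(x) = -\frac12 \langle \sym(x^\top \nabla f(x)), x^\top x - I_r \rangle$ is handled by Cauchy–Schwarz: $|h(x)| \le \frac12 s \normfro{x^\top x - I_r} \le \frac{s}{2(1-\epsilon)}\normfro{\nabla p(x)}$, and then a Young's inequality splits this between the $\normfro{\nabla p(x)}^2$ budget (absorbed into $\gamma p(x)$, hence covered by the $\lambda^2(1-\epsilon)^2$ denominator) and lower-order pieces.

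Collecting terms, $\cL(x) \le \frac{1}{\mu}\normfro{\grad f(x)}^2 + C \normfro{x^\top x - I_r}^2$ for a constant $C$ aggregating $(3+2\epsilon)^2\hat L^2$, $\mu L'$, and similar contributions, and since $\normfro{x^\top x - I_r}^2 \le \frac{1}{\lambda^2(1-\epsilon)^2}\normfro{\lambda \nabla p(x)}^2$, both pieces are dominated by $\max\{\tfrac1\mu,\; \tfrac{2(3+2\epsilon)^2\hat L^2 + \mu L'}{2\mu\lambda^2(1-\epsilon)^2}\}\,\normfro{\Lambda(x)}^2$ via the Pythagorean identity. The delicate step — the main obstacle — is the transfer of the PŁ inequality from the projected feasible point back to the infeasible iterate $x$: one must carefully track how $\grad f(\Proj_{\St}(x))$ compares to $\grad f(x)$ and to $\Lambda(x)$, ensuring the constants genuinely close with the claimed $\mu'$ and that no term requiring feasibility is used at an infeasible point. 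I would also double-check that the choice of $\delta$ is small enough that $\Proj_{\St}(x)$ is single-valued (the projection onto the Stiefel manifold is well-defined in a tubular neighborhood) and lands inside $\mathcal{D}(\mathcal{S}, 2\delta)$, so that Assumption \ref{assump:PL} is legitimately invoked.
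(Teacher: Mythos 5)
The paper itself gives no proof of this lemma --- it is imported verbatim from the cited reference \cite{sun2024global} --- so there is no in-paper argument to compare against line by line. Your overall architecture is the standard one and is surely close to the source: the Pythagorean identity $\normfro{\Lambda(x)}^2 = \normfro{\grad f(x)}^2 + \lambda^2\normfro{\nabla p(x)}^2$, the lower bound $\normfro{\nabla p(x)}^2 \geq (1-\epsilon)\normfro{x^\top x - I_r}^2$ via $\sigma_{\min}(x)^2 \geq 1-\epsilon$, projecting to the polar factor $y = \Proj_{\St}(x)$ (single-valued here since $x$ has full column rank on the safety region), the observation that $\normfro{x-y} = \dist(x,\St) \leq \dist(x,\mathcal{S}) \leq \delta$ so that $y \in \mathcal{D}(\mathcal{S},2\delta)$ and Assumption \ref{assump:PL} legitimately applies at $y$, and the Lipschitz transfer of $\grad f$ between $y$ and $x$. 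All of that is sound.

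The genuine gap is in how you dispose of the two terms that are \emph{first order} in the constraint violation $v \triangleq \normfro{x^\top x - I_r}$, namely $f(x) - f(y)$ and $h(x)$. You bound each separately --- the former by $O(\hat L)\normfro{x-y} = O(\hat L)\,v$, the latter by Cauchy--Schwarz as $|h(x)| \leq \tfrac{1}{2}s\,v$ --- and then propose to absorb them with Young's inequality into the $\lambda^2\normfro{\nabla p(x)}^2 \gtrsim \lambda^2(1-\epsilon)v^2$ budget. This cannot work: Young's inequality turns $c\,v$ into $\tfrac{c^2}{2\tau} + \tfrac{\tau}{2}v^2$, and the additive constant $\tfrac{c^2}{2\tau}$ does not vanish as $\Lambda(x) \to 0$, whereas the right-hand side of the lemma must. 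More bluntly, a term linear in $v$ with coefficient $s$ (which does not vanish at stationary points, since $s$ controls the \emph{symmetric} part of $x^\top\nabla f(x)$ while $\grad f$ is built from the skew part) can never be dominated by a quantity that is quadratic in $v$ near the manifold. The correct mechanism is cancellation, not absorption: writing $x = y(x^\top x)^{1/2}$ and Taylor-expanding $f$ at $y$ along the normal direction gives $f(x) - f(y) = \tfrac{1}{2}\langle \sym(y^\top\nabla f(y)),\, x^\top x - I_r\rangle + O\bigl((L+\hat L)v^2\bigr)$, and $h(x)$ is constructed precisely to equal the negative of that leading term up to another $O(\hat L v^2)$ error. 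Hence $f(x)+h(x) - f(y) = O\bigl((L+\hat L)v^2\bigr)$, and only then is everything quadratic in $v$ and absorbable into $\lambda^2\normfro{\nabla p(x)}^2$. Your proof as planned would stall exactly at the step you flag as ``lower-order pieces''; recognizing that $h$ exists to cancel the first-order normal variation of $f$ is the missing idea.
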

We note that $\mu'$ depends on a number of factors. In practice, if we set $\lambda = \mathcal{O}(L')$ and also $L'$ and $\hat{L}$ are of the same order, the two terms in $\mu'$ are of the same order, and we have $\mu' \approx \mu$.

Next, we use the following lemma to demonstrate the quadratic growth of the merit function $\cL(x)$ around the set of  local minima $\mathcal{S}$.

\begin{lemma}[Quadratic Growth] \label{lem:QG}
Let Assumptions \ref{assump:Lip} and \ref{assump:PL} hold. For $x \in \St(d,r)^\epsilon \cap \mathcal{D}(\mathcal{S}, \delta)$, the merit function $\cL(x)$ satisfies
\begin{align}\label{eq:quad}
    \cL(x) \geq \frac{\mu' \rho^2 }{4} \text{dist}(\mathcal{S}, x)^2.
\end{align}
\end{lemma}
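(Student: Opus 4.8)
\emph{Proof plan.}
The plan is to first upgrade the two facts already at our disposal --- the pseudo gradient domination $\cL(x)\le\frac{1}{\mu'}\normfro{\Lambda(x)}^2$ of Lemma \ref{lem:pseudo-grad-dominate} and the correlation bound $\langle\Lambda(x),\nabla\cL(x)\rangle\ge\rho\normfro{\Lambda(x)}^2$ of Proposition \ref{prop_L}(2) --- into a genuine Polyak--\L{}ojasiewicz inequality for the merit function $\cL$ itself, and then to run the standard ``P\L{} $\Rightarrow$ quadratic growth'' argument along the negative gradient flow of $\cL$. Cauchy--Schwarz applied to Proposition \ref{prop_L}(2) gives $\normfro{\nabla\cL(x)}\ge\rho\normfro{\Lambda(x)}$ on $\St(d,r)^\epsilon$, and combining this with Lemma \ref{lem:pseudo-grad-dominate} yields
\begin{equation*}
    \normfro{\nabla\cL(x)}^2\;\ge\;\rho^2\normfro{\Lambda(x)}^2\;\ge\;\rho^2\mu'\,\cL(x)
\end{equation*}
for $x$ in the neighborhood $\St(d,r)^\epsilon\cap\mathcal{D}(\mathcal{S},\delta)$ of $\mathcal{S}$. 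We also record that on this neighborhood $\cL\ge 0$ and $\cL(x)=0$ forces $x\in\mathcal{S}$ --- this follows (for $\delta$ small enough that $\mathcal{D}(\mathcal{S},\delta)\subseteq\St(d,r)^\epsilon$) from the choice of $\gamma$ in \eqref{ineq:gamma} together with the identity $\cL\equiv f$ on $\St(d,r)$ --- so that $\sqrt{\cL(\cdot)}$ is well defined and the case $\cL(x)=0$ makes \eqref{eq:quad} trivial.

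\textbf{Gradient flow and conclusion.}
Fix $x\in\St(d,r)^\epsilon\cap\mathcal{D}(\mathcal{S},\delta)$. If $\cL(x)\ge\frac{\mu'\rho^2}{4}\delta^2$ then \eqref{eq:quad} is immediate since $\dist(\mathcal{S},x)\le\delta$, so assume $0<\cL(x)<\frac{\mu'\rho^2}{4}\delta^2$. Let $\phi$ solve $\dot\phi(t)=-\nabla\cL(\phi(t))$ with $\phi(0)=x$ (well posed since $\nabla\cL$ is Lipschitz on $\St(d,r)^\epsilon$ by Proposition \ref{prop_L}(1)). While $\phi$ stays in the region where the P\L{} inequality holds, $\frac{d}{dt}\cL(\phi(t))=-\normfro{\dot\phi(t)}^2$ and $\normfro{\dot\phi(t)}=\normfro{\nabla\cL(\phi(t))}\ge\rho\sqrt{\mu'}\sqrt{\cL(\phi(t))}$, so
\begin{equation*}
    \frac{d}{dt}\sqrt{\cL(\phi(t))}=-\frac{\normfro{\dot\phi(t)}^2}{2\sqrt{\cL(\phi(t))}}\;\le\;-\frac{\rho\sqrt{\mu'}}{2}\,\normfro{\dot\phi(t)},
\end{equation*}
and the length of $\phi$ over any such interval $[0,T]$ is at most $\frac{2}{\rho\sqrt{\mu'}}\big(\sqrt{\cL(x)}-\sqrt{\cL(\phi(T))}\big)\le\frac{2}{\rho\sqrt{\mu'}}\sqrt{\cL(x)}<\delta$. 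A continuation argument --- using this length bound and the $\delta$-to-$2\delta$ margin between the hypotheses of Lemma \ref{lem:pseudo-grad-dominate} and Assumption \ref{assump:PL} to keep $\phi$ inside the validity region --- shows $\phi$ is defined and confined there for all $t\ge 0$. Then $\frac{d}{dt}\cL(\phi(t))\le-\rho^2\mu'\cL(\phi(t))$ gives $\cL(\phi(t))\to 0$, and finite length gives $\phi(t)\to x_\infty$ with $\cL(x_\infty)=0$ and $\nabla\cL(x_\infty)=0$; Proposition \ref{prop_L}(2) forces $\Lambda(x_\infty)=0$, hence $\nabla p(x_\infty)=0$ and $\grad f(x_\infty)=0$, so $x_\infty\in\St(d,r)$ and $f(x_\infty)=\cL(x_\infty)=0$, and since $\cL\ge 0=\cL(x_\infty)$ nearby and $\cL\equiv f$ on the manifold, $x_\infty$ is a local minimizer of $f$ on $\St(d,r)$ with value $f_{\mathcal{S}}^*$, i.e.\ $x_\infty\in\mathcal{S}$. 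Therefore
\begin{equation*}
    \dist(\mathcal{S},x)\;\le\;\normfro{x-x_\infty}\;\le\;\int_0^\infty\normfro{\dot\phi(t)}\,dt\;\le\;\frac{2}{\rho\sqrt{\mu'}}\sqrt{\cL(x)},
\end{equation*}
and squaring gives \eqref{eq:quad}.

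\textbf{Main obstacle.}
The genuinely delicate point is the confinement of the gradient flow: the P\L{} inequality is available only on the neighborhood of $\mathcal{S}$ where Lemma \ref{lem:pseudo-grad-dominate} applies, so one must show a priori that the flow started within distance $\delta$ of $\mathcal{S}$ cannot escape it. This is exactly what forces the case split at threshold $\frac{\mu'\rho^2}{4}\delta^2$ and the length bound $<\delta$, and why the slack between Assumption \ref{assump:PL} (imposed on $\mathcal{D}(\mathcal{S},2\delta)$) and the $\delta$-radius lemmas is needed; observe that this threshold is exactly $\delta^2$ times the constant claimed in \eqref{eq:quad}, so the estimate closes on itself. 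A secondary technicality is confirming that the limit of the flow is a genuine local minimizer (an element of $\mathcal{S}$) and not merely some other first-order stationary point, which is where $\cL\ge 0$ and $\cL\equiv f$ on $\St(d,r)$ come in; everything else is the textbook P\L{}-to-quadratic-growth computation.
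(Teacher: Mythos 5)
Your proof takes the same essential route as the paper for the decisive step and differs only in how the last implication is discharged. Exactly as in the paper, you apply Cauchy--Schwarz to Proposition \ref{prop_L}(2) to get $\normfro{\nabla\cL(x)}\geq\rho\normfro{\Lambda(x)}$ and combine this with Lemma \ref{lem:pseudo-grad-dominate} to obtain the ambient P\L{} inequality $\normfro{\nabla\cL(x)}^2\geq\mu'\rho^2\,\cL(x)$ on $\St(d,r)^\epsilon\cap\mathcal{D}(\mathcal{S},\delta)$. At that point the paper simply cites Proposition 2.2 of \cite{rebjock2023fast} (``local P\L{} implies quadratic growth'') as a black box, whereas you prove that implication from scratch by integrating the negative gradient flow of $\cL$, arriving at the correct constant $\mu'\rho^2/4$. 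What your version buys is self-containedness; what it costs is that you must supply the technicalities the citation absorbs, and two of them are not fully closed as written. First, confinement: your length bound only keeps the flow inside $\mathcal{D}(\mathcal{S},2\delta)$, but the P\L{} inequality you are running on comes from Lemma \ref{lem:pseudo-grad-dominate}, which is stated only on $\mathcal{D}(\mathcal{S},\delta)$; the $\delta$-to-$2\delta$ slack you invoke lives in Assumption \ref{assump:PL}, so exploiting it would require re-proving that lemma on the larger set rather than citing it. Second, you assert rather than establish that $\cL\geq 0$ near $\mathcal{S}$ and that the flow's limit $x_\infty$ belongs to $\mathcal{S}$ (rather than being some other zero of $\nabla\cL$ with $\cL=0$); certifying that the limit lands in the minimizing set is precisely the content of the cited proposition. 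Neither point reflects a wrong idea---this is the textbook P\L{}-to-quadratic-growth argument and the constants check out---but in its present form your proof still leans on the same facts the citation would have supplied.
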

\begin{proof}
    Since $x \in \St(d, r)^\epsilon$, given Proposition \ref{prop_L}, it is clear that $\normfro{\nabla \cL(x)} \geq \rho \normfro{ \Lambda(x)}.$
Combined with Lemma \ref{lem:pseudo-grad-dominate}, we have,
$$\mathcal{L}(x)  \leq \frac{1}{\mu'} \normfro{\Lambda(x)}^2 \leq \frac{1}{\mu' \rho^2} \normfro{\nabla \cL(x)}^2.$$
    Therefore, we know that the local Euclidean PŁ condition for $\cL(x)$ holds on $\forall x \in \mathcal{D}(\mathcal{S}, \delta) \cap \St(d, r)^\epsilon$. Given Proposition 2.2 in \cite{rebjock2023fast}, 
the local Euclidean PŁ condition on $\cL(x)$ implies the quadratic growth relationship \eqref{eq:quad}. 
\end{proof}

With the introduction of the previous two lemmas, we can now present our theorem. We show that when $x$ is in the neighborhood of a local minimizer, Algorithm \ref{alg:retraction_free} exhibits a local linear convergence under Assumption \ref{assump:PL}.

        \captionsetup{justification=centering}

\begin{figure*}[t]
    \centering
    \begin{subfigure}[b]{.2\textwidth}
\centering
\includegraphics[width=.895\textwidth]{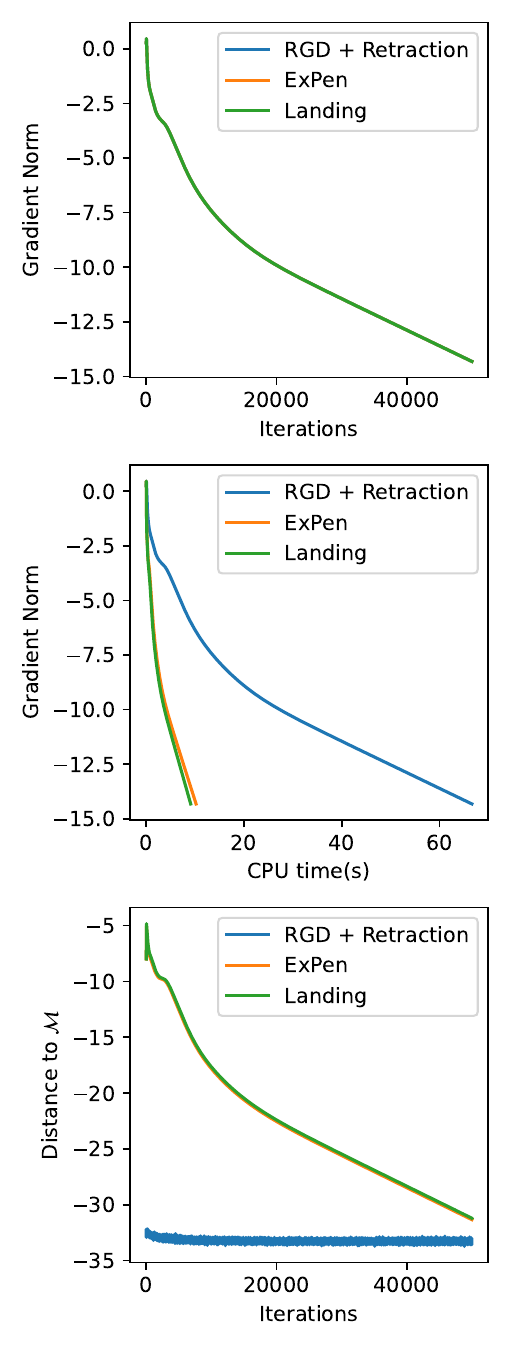}
    \caption{Performance of the landing algorithm on PCA.}
    \label{fig:PCA}
\end{subfigure}
\begin{subfigure}[b]{.79\textwidth}
\centering
        \includegraphics[width=\textwidth]{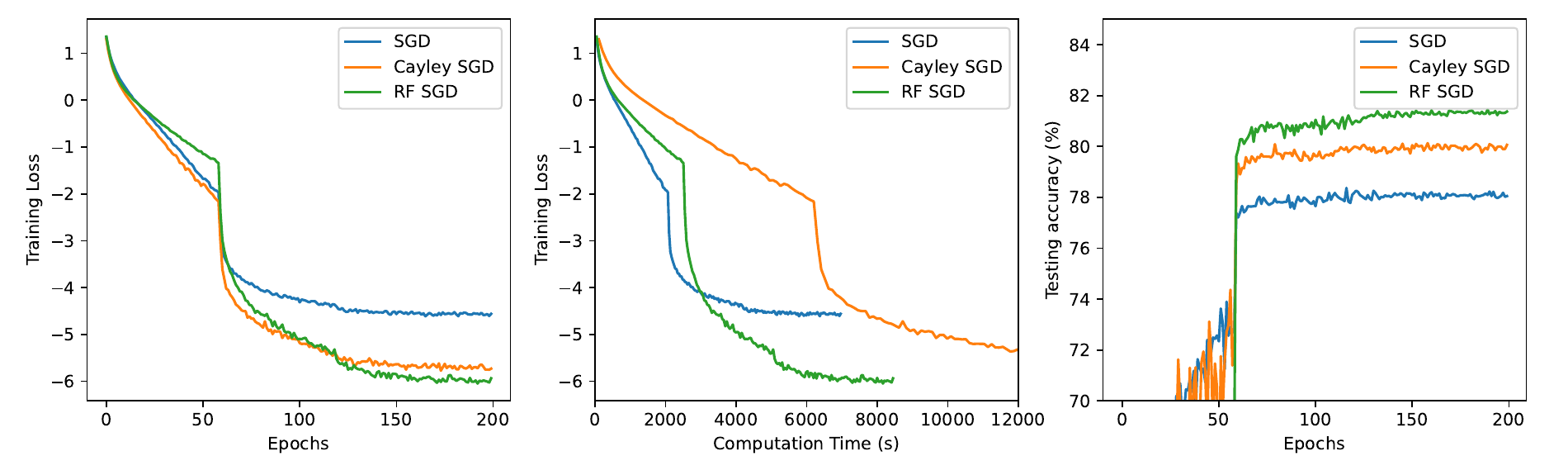}
        \includegraphics[width=\textwidth]{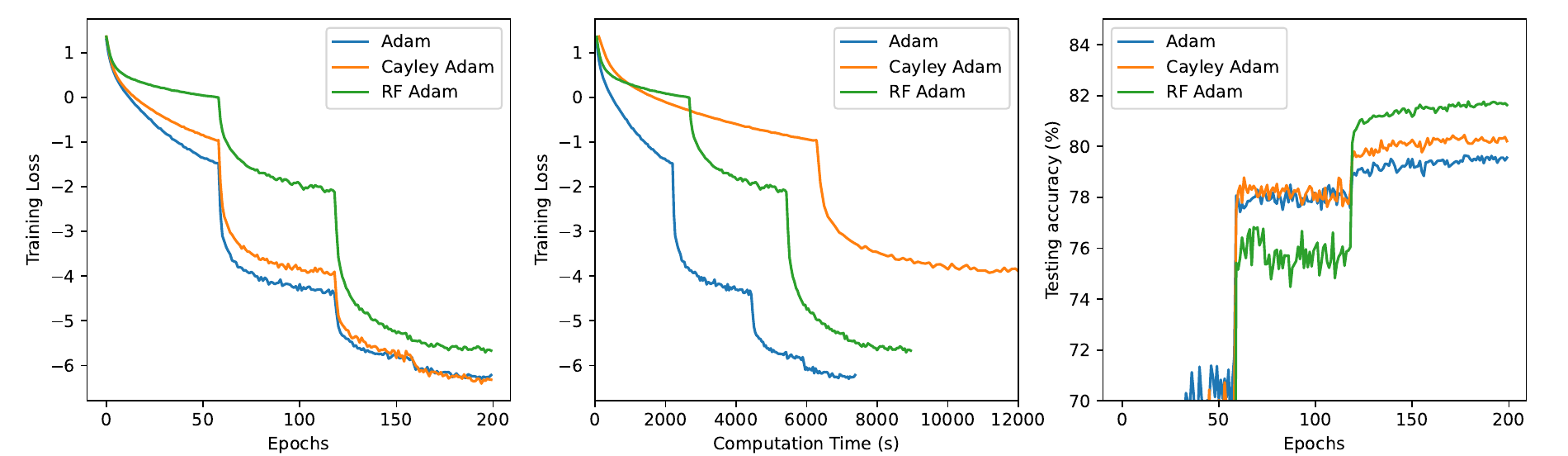}
    \caption{Comparison of retraction-free methods vs. SGD and Adam-type optimizers \\and their Cayley counterparts for ResNet on CIFAR-100 dataset.}
    \label{fig:CNN}\end{subfigure}
\end{figure*}

\begin{theorem}\label{central_thm}
    Let function $f$ satisfy Assumptions \ref{assump:Lip} and \ref{assump:PL}, and the initial point $x_0 \in \mathcal{D}(\mathcal{S}, \delta) \cap \St(d, r)^\epsilon$ is such that $\cL(x_0) \leq \frac{\mu' \rho^2 \delta^2}{16}$. Choose
    the step size $0<\alpha\leq \min\{\frac{\rho}{L'}, \alpha_{safe}\}$ and let iterates $\{x_k\}$ follow Algorithm \ref{alg:retraction_free}. Then, the merit function defined in \eqref{eq:merit_def} converges linearly $\forall k \geq 0$, such that
    \begin{equation}
        \cL(x_k)  \leq \left( 1 - \frac{\alpha \rho \mu'}{2 }\right)^k \cL(x_0).
    \end{equation}
\end{theorem}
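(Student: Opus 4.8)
The plan is to read one landing iteration $x_{k+1}=x_k-\alpha\Lambda(x_k)$ as an inexact gradient-descent step on the merit function $\cL$, derive a one-step contraction, and telescope, while running an induction to guarantee that every iterate stays in the region $\St(d,r)^\epsilon\cap\mathcal D(\mathcal S,\delta)$ on which the auxiliary results hold.

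\emph{One-step decrease.} Since $\cL$ is $L_\cL$-smooth on $\St(d,r)^\epsilon$ and $L'\ge L_\cL$, the descent lemma applied to the update gives
\begin{equation*}
\cL(x_{k+1})\le \cL(x_k)-\alpha\langle\nabla\cL(x_k),\Lambda(x_k)\rangle+\tfrac{L'\alpha^2}{2}\normfro{\Lambda(x_k)}^2.
\end{equation*}
By Proposition \ref{prop_L}(2) we have $\langle\nabla\cL(x_k),\Lambda(x_k)\rangle\ge\rho\normfro{\Lambda(x_k)}^2$, and the choice $\alpha\le\rho/L'$ makes $\tfrac{L'\alpha}{2}\le\tfrac{\rho}{2}$, so the right-hand side is at most $\cL(x_k)-\tfrac{\alpha\rho}{2}\normfro{\Lambda(x_k)}^2$. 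Then Lemma \ref{lem:pseudo-grad-dominate} (with $f^*_{\mathcal S}=0$) yields $\normfro{\Lambda(x_k)}^2\ge\mu'\cL(x_k)$, hence $\cL(x_{k+1})\le\bigl(1-\tfrac{\alpha\rho\mu'}{2}\bigr)\cL(x_k)$; iterating over $k$ produces the claimed geometric rate.

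\emph{Keeping the iterates feasible.} The above chain is valid at step $k$ only when $x_k\in\St(d,r)^\epsilon\cap\mathcal D(\mathcal S,\delta)$, so I would argue by induction, carrying both ``$x_j\in\St(d,r)^\epsilon\cap\mathcal D(\mathcal S,\delta)$'' and ``$\cL(x_j)\le(1-\tfrac{\alpha\rho\mu'}{2})^j\cL(x_0)$'' for $j\le k$. The base case is the hypothesis on $x_0$. For the inductive step, the one-step decrease (legitimate since $x_k$ lies in the region) gives the contraction at $j=k+1$, and in particular $\cL(x_{k+1})\le\cL(x_0)\le\tfrac{\mu'\rho^2\delta^2}{16}$; membership $x_{k+1}\in\St(d,r)^\epsilon$ follows from Proposition \ref{prop:safe_cen} because $\alpha\le\alpha_{safe}$ (with $G$ a uniform bound on $\normfro{\grad f}$ over the compact set $\St(d,r)^\epsilon$); and plugging the value bound into the quadratic-growth estimate of Lemma \ref{lem:QG} gives $\dist(\mathcal S,x_{k+1})^2\le\tfrac{4}{\mu'\rho^2}\cL(x_{k+1})\le\tfrac{\delta^2}{4}$, i.e. $\dist(\mathcal S,x_{k+1})\le\delta/2<\delta$, which closes the induction.

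\emph{Main obstacle.} The delicate point is that Lemma \ref{lem:QG}, which I use to certify $x_{k+1}\in\mathcal D(\mathcal S,\delta)$, is itself only stated for points already in $\mathcal D(\mathcal S,\delta)$, so one has to avoid a circular induction. The natural fix is to exploit the slack between the $\delta$ appearing in Lemmas \ref{lem:pseudo-grad-dominate}--\ref{lem:QG} and the $2\delta$ in Assumption \ref{assump:PL}: the quadratic-growth inequality is in fact available on the larger set $\mathcal D(\mathcal S,2\delta)$, into which $x_{k+1}$ provably falls once $x_k\in\mathcal D(\mathcal S,\delta/2)$ and $\alpha\normfro{\Lambda(x_k)}$ is controlled by the same uniform bound used above; alternatively a short minimality/continuity argument along the finite trajectory does the job. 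Everything else — the descent lemma, the algebra behind $\alpha\le\rho/L'$, and the telescoping — is routine.
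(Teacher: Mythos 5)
Your proposal is essentially the paper's proof: the same descent-lemma step on $\cL$, the same use of Proposition \ref{prop_L}(2) and Lemma \ref{lem:pseudo-grad-dominate} to get the per-step contraction $\bigl(1-\tfrac{\alpha\rho\mu'}{2}\bigr)$, and the same induction carrying membership in $\St(d,r)^\epsilon\cap\mathcal D(\mathcal S,\delta)$ together with the value bound $\cL(x_k)\le\tfrac{\mu'\rho^2\delta^2}{16}$. The one place you diverge is the step you yourself flag as delicate: you apply Lemma \ref{lem:QG} to $x_{k+1}$, which is circular, and then propose to repair it by extending quadratic growth to $\mathcal D(\mathcal S,2\delta)$. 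The paper avoids the issue entirely by applying Lemma \ref{lem:QG} to $x_k$ (already in the region by the induction hypothesis), which gives $\dist(\mathcal S,x_k)\le\sqrt{4\cL(x_k)/(\mu'\rho^2)}\le\delta/2$, and then bounding the single step via the Lipschitz continuity of $\Lambda$ (which vanishes on $\mathcal S$): $\dist(\mathcal S,x_{k+1})\le\dist(\mathcal S,x_k)+\alpha\normfro{\Lambda(x_k)}\le(1+\alpha L')\dist(\mathcal S,x_k)\le\delta$, using $\alpha\le\rho/L'\le 1/(2L')$. This is exactly the alternative you sketch parenthetically (``falls once $x_k\in\mathcal D(\mathcal S,\delta/2)$ and $\alpha\normfro{\Lambda(x_k)}$ is controlled''), so your argument closes with no appeal to the $2\delta$ slack; the only substantive remark is that the paper's ordering of the two lemmas makes the induction non-circular without any extension of Lemma \ref{lem:QG}.
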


\begin{proof}
We prove the theorem by induction. Suppose that $x_k \in \mathcal{D}(\mathcal{S}, \delta) \cap \St(d, r)^\epsilon$ and that $\cL(x_k) \leq \frac{\mu' \rho^2 \delta^2}{16}$ for some $k$. Since the merit function is  $L'$-Lipschitz smooth, using the update of the landing algorithm, we get
    \begin{equation*}
\resizebox{0.99\hsize}{!}{$
\begin{aligned}
\cL(x_{k+1})
\leq & \cL(x_{k}) + \langle \nabla \cL(x_k), x_{k+1} - x_{k}\rangle + \frac{L'}{2}\normfro{x_{k+1} - x_{k}}^2\\
    = & \cL(x_{k}) - \alpha \langle \nabla \cL(x_k), \Lambda(x_k)\rangle + \frac{\alpha^2 L'}{2}\normfro{\Lambda(x_k)}^2\\
    \leq & \cL(x_{k}) - \alpha \rho \normfro{\Lambda(x_k)}^2 + \frac{\alpha^2 L'}{2}\normfro{\Lambda(x_k)}^2\\
    \leq & (1 - \mu'(\alpha \rho - \frac{\alpha^2 L'}{2}))\cL(x_{k}),
  \end{aligned}
  $}
\end{equation*}
where the second to last line is from Proposition \ref{prop_L}, and the last inequality is from Lemma \ref{lem:pseudo-grad-dominate}. Now, if we choose a safe step size $0<\alpha\leq \min\{\frac{\rho}{L'}, \alpha_{safe}\}$, we get $\alpha \rho - \frac{\alpha^2 L'}{2} \geq \frac{\alpha \rho}{2}$.
This implies the monotonicity of $\cL$, and 
\begin{align}\label{eq:mono}
\cL(x_{k+1}) \leq \left( 1 - \frac{\alpha \rho \mu'}{2 }\right) \cL(x_{k}).    
\end{align}
Given that $x_k  \in \mathcal{D}(\mathcal{S}, \delta) \cap \St(d, r)^\epsilon$, from Lemma \ref{lem:QG} we get $\text{dist}(\mathcal{S}, x_k) \leq \sqrt{\frac{4 \cL(x_k)}{\mu' \rho^2}}\leq \frac{\delta}{2}$. Therefore, for the next iteration $k+1$, we have
\begin{align*}
    \text{dist}(\mathcal{S}, x_{k+1}) &\leq\text{dist}(\mathcal{S}, x_k) + \normfro{x_k - x_{k+1}}\\
    &=\text{dist}(\mathcal{S}, x_k) + \alpha \normfro{\Lambda(x_k)}\\
    &\leq \text{dist}(\mathcal{S}, x_k)+ \alpha L' \text{dist}(\mathcal{S}, x_k)\leq \delta.
\end{align*}
The last line stems from the Lipschitz continuity of $\Lambda(x)$ and the fact that $\alpha \leq \frac{\rho}{L'} \leq \frac{1}{2L'}$. This relationship combined with the safety guarantee ensures that for the next iteration, $x_{k+1} \in \mathcal{D}(\mathcal{S}, \delta) \cap \St(d, r)^\epsilon$ and that $\cL(x_{k+1}) \leq \frac{\mu' \rho^2 \delta^2}{16}$ due to \eqref{eq:mono}.

Therefore, given the initial condition on $x_0$ and by recursion, $x_k  \in \mathcal{D}(\mathcal{S}, \delta) \cap \St(d, r)^\epsilon$  and $\cL(x_k) \leq \frac{\mu' \rho^2 \delta^2}{16}$ is guaranteed to be satisfied by all $k$. Hence, we get $$\cL(x_k)  \leq \left( 1 - \frac{\alpha \rho \mu'}{2 }\right)^k \cL(x_0).$$
\end{proof}

Theorem \ref{central_thm} guarantees the exponential convergence of the merit function $\cL(x)$ while ensuring the iterates never leave the safe neighborhood region. Using Lemma \ref{lem:QG} and the Lipschitz continuity of $\Lambda(x)$, linear convergence rates for $\text{dist}(\mathcal{S}, x_{k})$ and $\normfro{\Lambda(x_k)}$ can be easily derived.

 \vspace{-2mm}

\section{Numerical Results} \label{sec:experiments}
For the experiments, we first evaluate the landing algorithm on a PCA task with synthetic data (Section \ref{sec:PCA}) and compare it with existing methods. Then, we evaluate it on training orthogonal-constrained CNNs (Section \ref{sec:CNN}).
 \vspace{-4mm}
\subsection{High-Dimensional PCA Task} \label{sec:PCA}
We start the numerical results with a traditional PCA task. The objective function for PCA can be written as:
\begin{equation*}
    \max_{x\in \R^{d\times r}} \langle A^\top A x,xD\rangle \ \ \text{s.t.} \  x\in \St(d, r),
\end{equation*}
where $A \in \R^{m\times d}$ is the data matrix and $D$ is a  diagonal matrix with $[D]_{11}>\cdots>[D]_{rr}>0$. 
We want to find the top-$r$ principal components of the data matrix. Let us set $d = 500, r = 20, m = 1000$ in our experiment.

We compare the performance of the landing algorithm with two other iterative gradient-based algorithms\footnote{ The code for the PCA experiment is available at \textit{https://github.com/sundave1998/Landing-Linear-Convergence}.}. We first consider a traditional Riemannian GD algorithm using projection as its retraction step. Secondly, we consider a state-of-the-art retraction-free approach named ExPen \cite{xiao2021solving}. All the algorithms are run with the same hyper-parameters and with the same initial points $x_0.$ The results are provided in Fig. \ref{fig:PCA}.

It is easy to verify that as long as the data matrix $A$ is not ill-conditioned and $rank(A) \geq r$, Assumption \ref{assump:PL} is satisfied \cite{liu2019quadratic}. We plot the dynamics of gradient norm $\norm{\grad f(x)}$ and the distance of iterates to $\mathcal{M}$, i.e., $\normfro{x^\top x - I_r}$, with respect to iterations as well as the CPU time. (i) All three algorithms exhibit similar linear convergence in terms of iterations, but the two retraction-free algorithms converge much faster in CPU time. (ii) Though the two retraction-free algorithms fail to strictly satisfy feasibility constraints initially, they eventually converge to a feasible critical point on $\St(d,r)$. (iii) While the landing algorithm performs similarly to ExPen, it does not require calculating the gradient of an alternative point other than $x$; not only is it more efficient, but also it is more suitable for implementation in modern neural networks, which we present in the next experiment.
 \vspace{-3.4mm}

\subsection{Training CNNs without Retractions}\label{sec:CNN}
We adopt the experimental setting in \cite{li2020efficient} to compare retraction-free algorithms with the state-of-the-art \textit{Cayley SGD} and \textit{Cayley Adam}, as well as vanilla SGD and Adam, which are suitable for unconstrained Euclidean problems.
Although no strict function assumptions are satisfied by neural networks due to the nature of ReLU activations, some works \cite{bai2019beyond} suggested that the loss of wide neural networks can be coupled with that of a quadratic model.
We refer to the retraction-free methods as RF-SGD and RF-Adam, respectively. For the adaptation of RF-Adam, we refer to Algorithm 2 in \cite{li2020efficient} for details but replace the \textit{Cayley\_loop} with a retraction-free landing update.

We evaluate all algorithms with Wide ResNet (WRN-28-10) on the CIFAR-100 dataset, where generalization is an important metric. All hyper-parameters are adapted from \cite{li2020efficient} with no modifications.
We present the results in Fig. \ref{fig:CNN}, suggesting that orthogonality constraints (imposed in Riemannian optimization) may help with better generalization, a phenomenon also observed in other works (e.g.,  \cite{cogswell2015reducing}). 
We note that the mini-batch training in neural networks introduces stochastic noise in gradient evaluations, negatively impacting convergence. In our experiments, all three methods are run with a learning rate decay scheduler in accordance with the previous literature, resulting in a multi-stage convergence with jumps every 50 epochs. 
In terms of generalization, both RF-SGD and RF-Adam perform better than their Cayley counterparts \cite{li2020efficient}, which suggests that compared to inexact Cayley updates, RF-based algorithms are perhaps more accurate and/or have more desirable properties in training neural networks.

We also plot the convergence of loss vs. computation time. Retraction-free (RF) methods have much faster convergence compared to Cayley-retraction methods. We note that the update in \cite{li2020efficient} is already an approximate solution with emphasis on efficiency, yet the landing algorithm helps with faster convergence and moderately improves the generalization.

\vspace{-2mm}

\section{Conclusion}
In this work, we studied a faster convergence for the retraction-free landing algorithm, addressing optimization under orthogonality constraints. We used the local Riemannian PŁ condition on the objective function to derive a local linear convergence rate for the landing algorithm. We also provided numerical results as a verification of the convergence speed while highlighting the computational efficiency of the algorithm compared to existing benchmarks.

For future research, generalization of these techniques to general manifold constraints, such as the Grassmann and Hyperbolic manifolds, is an interesting direction. In addition, given the wide application of orthogonality constraints in modern neural network structures, developing adaptive and momentum-based retraction-free optimization algorithms is another interesting open question.

\vspace{-2mm}

\bibliography{ref}
\bibliographystyle{plain}

\end{document}